\newtheorem{Lemma 1}{Lemma}[section]
\newtheorem{Lemma 2}[Lemma 1]{Lemma}
\newtheorem{Lemma 3}[Lemma 1]{Lemma}
\newtheorem{Lemma 4}[Lemma 1]{Lemma}
\newtheorem{Main}[Lemma 1]{Theorem}
\newtheorem{Lemma 5}[Lemma 1]{Lemma}
\begin{document}
\title{ On the distribution of numbers related to the divisors of $x^n-1$ }
\address{Department of Mathematics, Indian Institute of Technology Roorkee,India 247667}
\author{Sai Teja Somu}
\date{November 9, 2015}
\maketitle
\begin{abstract}
Let $n_1,\cdots,n_r$ be any finite sequence of integers and
let $S$ be the set of all natural numbers $n$ for which there exists a divisor $d(x)=1+\sum_{i=1}^{deg(d)}c_ix^i$ of $x^n-1$ such that $c_i=n_i$ for $1\leq i \leq r$. In this paper we show that the set $S$ has a natural density. Furthermore, we find the value of the natural density of $S$. 
\end{abstract}
\section{Introduction}
Cyclotomic polynomials arise naturally as irreducible divisors of $x^n-1$. The polynomial $x^n-1$ can be factored in the following way
\begin{equation}
x^n-1=\prod_{d|n}\phi_d(x).
\end{equation}
Applying Mobius inversion we get
\begin{equation}
\phi_n(x)=\prod_{d|n}(x^d-1)^{\mu(\frac{n}{d})}.
\end{equation}
 The problem of determining size of maximum coefficient of cyclotomic polynomials has been the subject of the papers \cite{B} and \cite{E}. In \cite{D} Pomerance and Ryan study the size of maximum coefficient of divisors of $x^n-1$. 

 It has been proven that for every finite sequence of integers $(n_i)_{i=1}^{r}$ , there exists $d(x)=1+\sum\limits_{i=1}^{deg(d)}c_ix^i$, a divisor of $x^n-1$ for some $n\in \mathbb{N}$, such that $c_i=n_i$ for  $1\leq i \leq r$. In this paper we investigate the following problem. For a given sequence $(n_i)_{i=1}^{r}$, let
$S(n_1,\cdots,n_r)$ denote the set of all $n$ such that $x^n-1$ has a divisor $d(x)$ of the form $d(x)=1+\sum\limits_{i=1}^{r}n_ix^i+\sum\limits_{i=r+1}^{deg(d)}c_ix^i$. We prove that $S(n_1,\cdots,n_r)$ has a natural density. Observe that if $n\in S(n_1,\cdots,n_r)$ then every multiple of $n$ is in $S(n_1,\cdots,n_r)$. 

\section{Notation}

If $f(x)$ and $g(x)$ are two analytic functions in some neighborhood of $0$, we denote $f(x)\equiv g(x) \mod x^{r+1}$ if the coefficients of $x^i$ in the power series of $f(x)$ and $g(x)$ are equal for $0\leq i \leq r$. 

We denote $\omega(n)$ for number of distinct prime factors of $n$. Let $\delta(d)$ be $1$ if $d\neq 1$ and $\delta(d)$ be $-1$ otherwise. Note that
\begin{equation}
\phi_n(x)=\delta(n)\prod_{d|n}\left(1-x^d\right)^{\mu\left(\frac{n}{d}\right)}.
\end{equation} 
\section{Proof of Main Theorem}
We require several lemmas in order to prove that $S(n_1,\cdots,n_r)$ has a natural density.
\begin{Lemma 1}\label{Lemma 1}
For every finite sequence of integers $n_1,\cdots,n_r$ there exists a unique sequence of integers $k_1,\cdots,k_r$ such that
\begin{equation}\label{equation 1}
\prod\limits_{i=1}^{r}(1-x^i)^{k_i}\equiv 1+\sum_{i=1}^{r}n_ix^i \mod x^{r+1}.
\end{equation}
\end{Lemma 1}
\begin{proof}
The proof that there exists a sequence $k_1,\cdots,k_r$ is by induction on $r$. If $r=1$ and $n_1\in \mathbb{Z}$ then $(1-x)^{-n_1}\equiv 1+n_1x \mod x^2$ hence the existence part of lemma is true for $r=1$. If we assume that the existence part of lemma is true for $r$, then for any sequence of $r+1$ integers $(n_i)_{i=1}^{r+1}$,  there exist $r$ integers $k_1,\cdots,k_r$ such that 
\[ \prod\limits_{i=1}^{r}(1-x^i)^{k_i}\equiv 1+\sum_{i=1}^{r}n_ix^i \mod x^{r+1}.\]
Let $n'_{r+1}$ be an integer such that 
\[ \prod\limits_{i=1}^{r}(1-x^i)^{k_i}\equiv 1+\sum_{i=1}^{r}n_ix^i+n'_{r+1}x^{r+1} \mod x^{r+2}.\]
We have 
\[\prod\limits_{i=1}^{r}(1-x^i)^{k_i}(1-x^{r+1})^{n'_{r+1}-n_{r+1}}\equiv 1+\sum_{i=1}^{r+1}n_ix^i\mod x^{r+2}.\]
Hence the existence part of the lemma is true for $r+1$.

For the uniqueness part,
if there are two finite sequences $k_1,\cdots,k_r$ and $k'_1,\cdots,k'_r$ such that 
\[ \prod\limits_{i=1}^{r}(1-x^i)^{k_i}\equiv \prod\limits_{i=1}^{r}(1-x^i)^{k'_i} \mod x^{r+1}.\]

If the two sequences are distinct then let $i$ be the least index such that $k_i-k'_i\neq 0$ then  we have 
 \begin{align*}
 \prod_{j=i}^{r}(1-x^i)^{k_i-k'_i}\equiv 1 \mod x^{i+1}
 \end{align*}  
or 
\[1-(k_i-k'_i)x^{i}\equiv 1 \mod x^{i+1}\] which implies $k_i-k'_i=0$ contradicting the assumption that $k_i-k'_i\neq0$.
\end{proof}

For a given sequence $n_1,\cdots,n_r$ we proved that there exists a unique sequence $k_1(n_1,\cdots,n_r),\cdots,k_r(n_1,\cdots,n_r)$ such that equation (\ref{equation 1}) is true. Let $A(n_1,\cdots,n_r)$ be the set defined by $A(n_1,\cdots,n_r):=\{ 1\leq i\leq r : k_i(n_1,\cdots,n_r)\neq 0 \}$. If the set $A(n_1,\cdots,n_r)$ is non empty let $l(n_1,\cdots,n_r)$ be the least common multiple of elements of $A(n_1,\cdots,n_r)$, otherwise let $l(n_1,\cdots,n_r)$ be $1$.

\begin{Lemma 2}\label{Lemma 2}
If $n\in S(n_1,\cdots,n_r)$ then $n$ is a multiple $l(n_1,\cdots,n_r)$.
\end{Lemma 2}
\begin{proof}
We will prove that if $l(n_1,\cdots,n_r)\nmid n$ then $n\notin S(n_1,\cdots,n_r)$.
 If $l(n_1,\cdots,n_r)$ does not divide $n$ then there exists an $i\in A(n_1,\cdots,n_r)$ such that $i\nmid n$. That is, $k_i(n_1,\cdots,n_r)\neq 0$ and $i\nmid n$. 

Any divisor $d(x)$ of $x^n-1$ such that $d(0)=1$ will be of the form \[d(x)=\prod_{d\in S}\delta(d)\phi_d(x),\] where $S$ is some subset of set of divisors of $n$.
Hence \begin{align*}
d(x)&=\prod_{d\in S}\delta(d)\phi_d(x)\\
&=\prod_{d\in S}\prod_{d'|d}\left(1-x^{d'}\right)^{\mu(\frac{d}{d'})}\\
&\equiv \prod_{1\leq d' \leq r}\prod_{\substack{d\equiv 0 \mod d'\\d\in S}}\left(1-x^{d'}\right)^{\mu(\frac{d}{d'})} \mod x^{r+1}\\
&\equiv \prod_{j=1}^{r}(1-x^j)^{l_j}\mod x^{r+1},
\end{align*}
where $l_m=\sum_{\substack{d\in S\\d\equiv 0 \mod m}}\mu\left(\frac{d}{m}\right)$ for $1\leq m\leq r$.
Therefore as $i\nmid n$, $l_i=0.$ Hence $l_i\neq k_i(n_1,\cdots,n_r)$ and from uniqueness part of Lemma \ref{Lemma 1} we have $d(x)\not \equiv 1+\sum_{j=1}^{r}n_jx^j\mod x^{r+1}$. Hence $n\notin S(n_1,\cdots,n_r)$.
\end{proof} 
\begin{Lemma 3}\label{Lemma 3}
If $p_1,\cdots,p_s$ are distinct primes greater than $r$ not dividing $d$ and $q_1,\cdots,q_s$ are distinct primes  greater than $r$ and not dividing $d$ then for all natural numbers  $e_1,\cdots,e_s$ we have
 $\phi_{dp_1^{e_1}\cdots p_s^{e_s}}(x)\equiv \phi_{dq_1^{e_1}\cdots q_s^{e_s}}(x) \mod x^{r+1}.$ 
\end{Lemma 3}
\begin{proof}
For every divisor $d'$ of $d$ we have $\mu\left(\frac{dp_1^{e_1}\cdots p_s^{e_s}}{d'}\right)=\mu\left(\frac{dq_1^{e_1}\cdots q_s^{e_s}}{d'}\right).$  From equation (2)
\begin{align*}
\phi_{dp_1^{e_1}\cdots p_s^{e_s}}(x)&\equiv\prod_{d'|d}\left(1-x^{d'}\right)^{\mu\left(\frac{dp_1^{e_1}\cdots p_s^{e_s}}{d'}\right)}\mod x^{r+1}\\
&\equiv \prod_{d'|d}\left(1-x^{d'}\right)^{\mu\left(\frac{dq_1^{e_1}\cdots q_s^{e_s}}{d'}\right)}\mod x^{r+1}\\
& \equiv \phi_{dq_1^{e_1}\cdots q_s^{e_s}}(x) \mod x^{r+1}.
\end{align*}
\end{proof}

\begin{Lemma 5}\label{Lemma 5}
  If $p_1$ and $p_2$ are two distinct primes greater than $r$ and if $d\leq r$ then $\phi_{dp_1p_2}(x)\equiv \delta(d)\phi_d \mod x^{r+1}.$ 
\end{Lemma 5}
\begin{proof}
From (2) we have
\begin{align*}
\phi_{dp_1p_2}(x)&\equiv \prod_{d'|d}\left(1-x^{d'}\right)^{\mu\left(\frac{dp_1p_2}{d'}\right)}\mod x^{r+1}\\
&\equiv \prod_{d'|d}\left(1-x^{d'}\right)^{\mu\left(\frac{d}{d'}\right)}\mod x^{r+1}\\
&\equiv \delta(d) \phi_d(x) \mod x^{r+1}.
\end{align*}
\end{proof}

\begin{Lemma 4}\label{Lemma 4}
For every finite sequence $n_1,\cdots,n_r$ there exist $k$ distinct primes $q_1,\cdots,q_k$ greater than $r$ such that $n=l(n_1,\cdots,n_r)q_1q_2\cdots q_k\in S(n_1,\cdots,n_r)$.
\end{Lemma 4}
\begin{proof}
From Lemma \ref{Lemma 1} we have 
$ \prod\limits_{i=1}^{r}(1-x^i)^{k_i}\equiv 1+\sum_{i=1}^{r}n_ix^i \mod x^{r+1},$ where $k_i=k_i(n_1,\cdots,n_r)$.
From the definition of $A(n_1,\cdots,n_r)$, $k_i\neq 0$ if and only if $i\in A(n_1,\cdots,n_r)$. Let $i_1,\cdots,i_p$ be the elements of $A(n_1,\cdots,n_r)$. We have
    \begin{equation}\label{equation 2}
    1+\sum_{i=1}^{r}n_ix^i \equiv \prod\limits_{j=1}^{p}(1-x^{i_j})^{k_{i_j}} \mod x^{r+1} .
    \end{equation}
    
Let $r^{(j)}_1,\cdots,r^{(j)}_{|k_j|}$ for $ 1\leq j\leq p$ be numbers such that for $1\leq a\leq |k_{i_{j_1}}|$ and $1\leq b\leq |k_{i_{j_2}}|$,     $r^{(j_1)}_{a}=r^{(j_2)}_{b}$ if and only if $j_1=j_2$ and $a=b$. If $k_{i_{j}}>0$ then $r^{(j)}_{a}$ is a product of two distinct primes and each prime factor of $r^{(j)}_{a}$ is greater than $r$. If $k_{i_{j}}<0$ then $r^{(j)}_a$ is a prime number greater than $r$.

If $k_{i_j}>0$ then let 
\begin{equation}\label{equation 3}
d_j(x)=\prod_{m=1}^{k_{i_j}}\prod_{d|i_j}\phi_{dr^{(j)}_m}(x) .
\end{equation}
If $k_{i_j}>0$ then as $r^{(j)}_m$ is a product two prime factors greater from Lemma \ref{Lemma 5} we have
$\phi_{dr^{(j)}_m}(x)\equiv \delta(d)\phi_{d}(x) \mod x^{r+1} .$
Therefore 
\begin{align*}
\prod_{d|i_j}\phi_{dr^{(j)}_m}(x)&\equiv \prod_{d|i_j}\delta(d)\phi_{d}(x) \mod x^{r+1}\\
&\equiv (1-x^{i_j})\mod x^{r+1}.
\end{align*}
Hence from (\ref{equation 3}) we have
\begin{equation}\label{equation 4}
d_j(x)\equiv \prod_{m=1}^{k_{i_j}}(1-x^{i_j})\equiv (1-x^{i_j})^{k_{i_j}} \mod x^{r+1}.
\end{equation}
If $k_{i_j}<0$ let 
\[d_j(x)=\prod\limits_{m=1}^{-k_{i_j}}\prod_{d|i_j}\phi_{dr^{(j)}_m}(x) \mod x^{r+1}.\]
As $k_{i_j}<0$, $r^{(j)}_m$ is a prime number greater than $r$. Hence
\begin{align*}
\prod_{d|i_j}\phi_{dr^{(j)}_m}(x)&=\frac{\prod_{d|i_jr^{(j)}_m}\phi_{d}(x)}{\prod_{d|i_j}\phi_{d}(x)}\\
&\equiv \frac{\left(x^{i_jr^{(j)}_m}-1\right)}{(x^{i_j}-1)}\mod x^{r+1}\\
&\equiv \left(1-x^{i_j}\right)^{-1} \mod x^{r+1}.
\end{align*}
Therefore 

\begin{equation}\label{equation 5}
d_j(x)\equiv\prod\limits_{m=1}^{-k_{i_j}}\left(1-x^{i_j}\right)^{-1}\equiv \left(1-x^{i_j}\right)^{k_{i_j}} \mod x^{r+1}.
\end{equation}
From (\ref{equation 2}), (\ref{equation 4}) and (\ref{equation 5}) we have
\begin{equation}\label{equation 6}
d(x)=\prod_{j=1}^{p}d_j(x)\equiv \prod_{j=1}^{p}(1-x^{i_j})^{k_{i_j}}\equiv 1+\sum\limits_{i=1}^{r}n_ix^i \mod x^{r+1}.
\end{equation}
If the set $\{i_jr^{(j)}_m: 1\leq j \leq p, 1\leq m\leq |k_{i_j}| \}$ is non empty, let $n$ be the least common multiple of the elements of the set and let $n=1$ if the set is empty. Clearly $d(x)$ is a divisor of $x^n-1$ and therefore $n\in S(n_1,\cdots,n_r)$. Observe that $n$ is of the form $l(n_1,\cdots,n_r)q_1q_2\cdots q_k$ where $q_i$'s are distinct prime factors greater than $r$. 
\end{proof}

\begin{Main}\label{Main}
For every finite sequence $n_1,\cdots,n_r$, let
$N( n_1,\cdots,n_r,x)$ denote number of $n\leq x$ such that $n\in S(n_1,\cdots,n_r)$. There exists a $k\in \mathbb{N}$ such that \[N(n_1,\cdots,n_r,x)=C(n_1,\cdots,n_r)x+O\left(\frac{x(\log\log x)^{k}}{\log x}\right), \] where $C(n_1,\cdots,n_r)=\frac{1}{l(n_1,\cdots,n_r)}$.  
\end{Main}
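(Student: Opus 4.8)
Throughout write $l=l(n_1,\dots,n_r)$ and $S=S(n_1,\dots,n_r)$, and let $\omega_{>r}(n)$ denote the number of distinct prime factors of $n$ exceeding $r$; since $l$ is a least common multiple of integers at most $r$, all prime factors of $l$ are at most $r$, so $\omega_{>r}(lm)=\omega_{>r}(m)$ for every $m$. The plan is to squeeze $N$ between the counting function of the multiples of $l$ and that same function minus a thin exceptional set. The easy direction is the upper bound: by Lemma \ref{Lemma 2} every element of $S$ is a multiple of $l$, whence
\[ N(n_1,\dots,n_r,x)\le \#\{\,n\le x:\ l\mid n\,\}=\frac{x}{l}+O(1). \]
This already isolates the main term $x/l=C(n_1,\dots,n_r)\,x$, and the whole problem becomes showing that all but a negligible set of multiples of $l$ lie in $S$.

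The key step is a membership criterion: there is a constant $K=K(n_1,\dots,n_r)$ such that every multiple $n$ of $l$ with $\omega_{>r}(n)\ge K$ belongs to $S$. To prove it, use Lemma \ref{Lemma 4} to fix distinct primes $q_1,\dots,q_K>r$ with $n_0:=l\,q_1\cdots q_K\in S$, together with a divisor $d_0(x)=\prod_{d\in T}\delta(d)\phi_d(x)$ of $x^{n_0}-1$ satisfying $d_0(x)\equiv 1+\sum_{i=1}^r n_i x^i \mod x^{r+1}$, where $T$ is a set of divisors of $n_0$. Now take any $n=lm$ with $\omega_{>r}(n)\ge K$; then $m$ has at least $K$ distinct prime factors $P_1,\dots,P_K>r$. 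Every $d\in T$ splits uniquely as $d=a\prod_{i\in I}q_i$ with $a\mid l$ and $I\subseteq\{1,\dots,K\}$, because $\gcd(l,q_1\cdots q_K)=1$. Replacing $q_i$ by $P_i$ for every index sends $d\mapsto \tilde d:=a\prod_{i\in I}P_i$, a divisor of $l\,P_1\cdots P_K$, which in turn divides $n$. Lemma \ref{Lemma 3}, applied with core $a$ (coprime to all the $q_i$ and $P_i$ since $a\mid l$) and exponents all equal to $1$, gives $\phi_d(x)\equiv\phi_{\tilde d}(x)\mod x^{r+1}$; moreover $\delta(d)=\delta(\tilde d)$ since $d=1$ if and only if $\tilde d=1$. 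As the substitution $q_i\mapsto P_i$ is injective, the $\tilde d$ are distinct divisors of $n$, so $\prod_{d\in T}\delta(\tilde d)\phi_{\tilde d}(x)$ is a genuine divisor of $x^n-1$ congruent to $d_0(x)\equiv 1+\sum_{i=1}^r n_i x^i \mod x^{r+1}$. Hence $n\in S$, proving the criterion.

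It remains to count. By the criterion, any multiple of $l$ that is \emph{not} in $S$ satisfies $\omega_{>r}(n)<K$, and since $\omega(n)\le \omega_{>r}(n)+\pi(r)$ this forces $\omega(n)<K':=K+\pi(r)$. Combining with the upper bound gives
\[ \frac{x}{l}-\#\{\,n\le x:\ \omega(n)<K'\,\}+O(1)\le N(n_1,\dots,n_r,x)\le \frac{x}{l}+O(1). \]
I would finish by invoking the Hardy--Ramanujan--Landau estimate $\#\{n\le x:\ \omega(n)=j\}\ll x(\log\log x)^{j-1}/\log x$ for each fixed $j$; summing over $1\le j<K'$ (the top term dominating) bounds the exceptional count by $O\!\left(x(\log\log x)^{K'-2}/\log x\right)$, which yields the theorem with $k=K'-2=K+\pi(r)-2$.

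The conceptual heart, and the only place where something must genuinely be checked, is the membership criterion: one must verify that the prime swap $q_i\mapsto P_i$ produces \emph{distinct} indices dividing $n$, so that the image is again a product of distinct cyclotomic factors of $x^n-1$ with constant term $1$. The only analytic input is the standard uniform count of integers with a bounded number of prime factors, and the passage from $\omega_{>r}$ to $\omega$ merely shifts the exponent by the harmless constant $\pi(r)$. Finally, in the degenerate case $A(n_1,\dots,n_r)=\emptyset$ one has $l=1$, $K=0$, the exceptional set is empty, every $n$ lies in $S$, and the formula holds with main term $x$ and error $O(1)$, so any natural number $k$ is admissible there.
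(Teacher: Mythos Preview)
Your proof is correct and essentially identical to the paper's: upper bound from Lemma~\ref{Lemma 2}, a single witness $lq_1\cdots q_K\in S$ from Lemma~\ref{Lemma 4}, the prime swap $q_i\mapsto P_i$ via Lemma~\ref{Lemma 3} to show every multiple of $l$ with enough large prime factors lies in $S$, and the Hardy--Ramanujan bound on integers with few prime factors to control the exceptional set. The only cosmetic difference is that the paper uses $r$ as a crude upper bound for $\pi(r)$ when passing from $\omega_{>r}$ to $\omega$, landing on the error exponent $r+k-1$ where you get $K+\pi(r)-2$.
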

\begin{proof}
For brevity, let $S(n_1,\cdots,n_r)=S$ and $l(n_1,\cdots,n_r)=l$.
From Lemma \ref{Lemma 4} there exists an $m_1$ of the form $m_1=lq_1\cdots q_k$ and a divisor $d_1(x)$ of $x^{m_1}-1$ such that
\begin{equation}\label{equation 7} 
d_1(x)\equiv 1+\sum_{i=1}^{p}n_ix^i \mod x^{r+1}.
\end{equation}
 For every $m_2$ of the form $m_2=lp_1\cdots p_k$ such that $p_1,\cdots,p_k$ are distinct primes greater than $r$. Let $S_1$ be the set of divisors of $m_1$ and $S_2$ be the set of divisors of $m_2$. Let $g:S_1\rightarrow S_2$ be a map defined as follows. As $l$ and $q_1\cdots q_k$ are relatively prime, every divisor of $d$ of $lq_1\cdots q_k$ can be uniquely written in the form $d=d'_1q_{i_1}\cdots q_{i_s}$ where $d'_1$ divides $l$. Define $g(d'_1q_{i_1}\cdots q_{i_s})=d'_1p_{i_1}\cdots p_{i_s}$.
 From Lemma \ref{Lemma 3} it follows that 
 $\phi_{d}(x)\equiv \phi_{g(d)}(x)\mod x^{r+1}$.
 As $d_1(x)$ is of the form $\prod_{d'\in R_1}\delta(d')\phi_{d'}(x)$ where $R_1$ is a subset of $S_1$ there will be $d_2(x)=\prod_{d'\in R_1}\delta(g(d'))\phi_{g(d')}(x)$, a divisor of $x^{m_2}-1$, and $d_2(x)\equiv d_1(x)\equiv 1+\sum_{i=1}^{r}n_ix^i \mod x^{r+1}$. Therefore every number of the form $lp_1\cdots p_k$ where $p_i$'s are distinct primes greater than $r$ belongs to $S$ which implies that every number $lm$ belongs to $S$, if number of distinct prime factors of $m$ greater than $r$ is at least $k$. Hence if $\omega(m)\geq r+k$ then $lm\in S$ as $\omega(m)\geq r+k$ implies that number of prime factors of $m$ greater than $r$ is at least $k$. From 3.1. Lemma B of \cite{C} \[N(n_1,\cdots,n_r,x)\geq\left|\{lm\leq x: \omega(m)\geq r+k\}\right|=\frac{x}{l}+O\left(\frac{x(\log\log x)^{r+k-1}}{\log x}\right).\]
 From Lemma \ref{Lemma 2}, if $n\in S$ then $l|n$ which implies that $N(n_1,\cdots,n_r,x)\leq \frac{x}{l}$. Combining the two inequalities we get
 \[N(n_1,\cdots,n_r,x)=\frac{x}{l}+O\left(\frac{x(\log\log x)^{r+k-1}}{\log x}\right)\] which completes the proof of the theorem.
\end{proof}

\end{document}